\RequirePackage{etex}
\documentclass{amsart}
\usepackage{amsmath, amsthm, amssymb, mathrsfs, eucal, epsfig}
\usepackage{dcpic, pictex}

\begin{document}

\newtheorem{theorem}{Theorem}[section]
\newtheorem{lemma}[theorem]{Lemma}
\newtheorem{proposition}[theorem]{Proposition}
\newtheorem{corollary}[theorem]{Corollary}
\newtheorem{conjecture}[theorem]{Conjecture}
\newtheorem{question}[theorem]{Question}
\newtheorem{problem}[theorem]{Problem}
\newtheorem*{claim}{Claim}
\newtheorem*{criterion}{Criterion}
\newtheorem*{main_rat_thm}{Rationality Theorem}

\theoremstyle{definition}
\newtheorem{definition}[theorem]{Definition}
\newtheorem{construction}[theorem]{Construction}
\newtheorem{notation}[theorem]{Notation}

\theoremstyle{remark}
\newtheorem{remark}[theorem]{Remark}
\newtheorem{example}[theorem]{Example}

\numberwithin{equation}{subsection}

\def\Z{\mathbb Z}
\def\R{\mathbb R}
\def\Q{\mathbb Q}
\def\D{\mathcal D}
\def\E{\mathcal E}
\def\RR{\mathcal R}
\def\P{\mathcal P}
\def\F{\mathcal F}

\def\cl{\textnormal{cl}}
\def\scl{\textnormal{scl}}
\def\homeo{\textnormal{Homeo}}
\def\rot{\textnormal{rot}}

\def\Id{\textnormal{Id}}
\def\SL{\textnormal{SL}}
\def\PSL{\textnormal{PSL}}
\def\length{\textnormal{length}}
\def\fill{\textnormal{fill}}
\def\rank{\textnormal{rank}}
\def\til{\widetilde}

\title{Surface subgroups from homology}
\author{Danny Calegari}
\address{Department of Mathematics \\ Caltech \\
Pasadena CA, 91125}
\email{dannyc@its.caltech.edu}


\begin{abstract}
Let $G$ be a word-hyperbolic group, obtained as a graph of free groups
amalgamated along cyclic subgroups. If $H_2(G;\Q)$ is nonzero, then
$G$ contains a closed hyperbolic surface subgroup. Moreover, the unit
ball of the Gromov--Thurston norm on $H_2(G;\R)$ is a finite-sided rational
polyhedron.
\end{abstract}

\maketitle

\section{Introduction}

A famous question of Gromov (see \cite{Bestvina_problem})
asks whether every one-ended non-elementary
word-hyperbolic group contains a closed hyperbolic surface subgroup.
Almost nothing is known about this question in general. Gordon--Long--Reid \cite{GLR}
answer the question affirmatively for Coxeter groups and some Artin groups.

Bestvina remarks that Gromov's question is inspired by the well-known virtual
Haken conjecture in $3$-manifold topology. The case of $3$-manifold groups is
instructive. If $M$ is an aspherical $3$-manifold, every integral homology
class in $H_2(M;\Z)$ is represented by an embedded surface $S$. If $S$
is not $\pi_1$-injective, Dehn's lemma (see \cite{Hempel}, Chapter~4)
implies that $S$ can be compressed, reducing $-\chi(S)$.
By the hypothesis that $M$ is aspherical, 
after finitely many compressions, one obtains a $\pi_1$-injective
surface representing the given homology class. 

For more general classes of groups,
no tool remotely resembling Dehn's lemma exists. Nevertheless one can consider
the following strategy. Let $X$ be a $K(G,1)$, and let $A$ be a rational
homology class in $H_2(X;\Q)$. Suppose one can find a map of a closed surface
$f:S \to X$ with no spherical components,
representing $n(S)A$ in $H_2(X)$ for some integer $n(S)$, which
realizes the infimum of $-\chi(S)/n(S)$ over all surfaces and all integers $n$.
Then $f_*:\pi_1(S) \to \pi_1(X) = G$ is injective. For, otherwise, one could
find an essential loop $\alpha \subset S$ in the kernel of $f_*$ and (by Scott \cite{Scott})
find a suitable finite cover $S'$ of $S$ to which $\alpha$ lifts as an embedded loop.
Then $f':S' \to X$ could be compressed along $\alpha$, producing a new surface $S''$
representing $n(S'')A$ in homology, and satisfying
$-\chi(S'')/n(S'') < -\chi(S)/n(S)$, contrary to hypothesis. This infimal quantity
is called the {\em Gromov--Thurston norm} of the homology class $A$ (see \cite{Gromov_bounded}
or \cite{Thurston_norm} for an introduction to Gromov--Thurston norms and
bounded cohomology). In words,
if a map from a surface to $X$ realizes the Gromov--Thurston norm in a given projective
homology class, it is injective.

It is therefore an intriguing question to understand for which groups $G$ and
which homology classes in $H_2(G;\Q)$ one can find maps of surfaces (projectively)
realizing the Gromov--Thurston norm. In this paper we show that if $G$ is a group
obtained as a graph of free groups amalgamated along cyclic subgroups, and $A \in H_2(G;\Q)$
is a homology class with nonzero Gromov--Thurston norm, then some map of a surface
to a $K(G,1)$ realizes the Gromov--Thurston norm in the projective class of
$A$, and therefore $G$ contains a closed hyperbolic surface subgroup. The method
of proof is to localize the problem to finding norm minimizers for
suitable relative homology classes in the free factors. The relative Gromov--Thurston norm
(after normalization) turns out to be equal to the so-called {\em stable commutator}
norm, introduced in \cite{Calegari_scl}, and studied in free groups in
\cite{Calegari_pickle}. A consequence of the main theorem of \cite{Calegari_pickle} is
that extremal surfaces for the stable commutator norm
exist in every rational relative homology class in a free group. These extremal
surfaces can be glued together to produce extremal (closed) surface subgroups in $G$.
A more careful analysis reveals that the Gromov--Thurston norm on $G$ is piecewise
rational linear, and if $G$ is word-hyperbolic, the unit ball is a finite-sided rational
polyhedron.

\section{The $\scl$ norm}

\subsection{Commutator length}

If $G$ is a group and $g \in [G,G]$, the {\em commutator length} of $g$ (denoted
$\cl(g)$) is the smallest number of commutators in $G$ whose product is equal to $G$, and
the {\em stable commutator length} of $g$ (denoted $\scl(g)$) is the limit
$$\scl(g) = \lim_{n \to \infty} \frac {\cl(g^n)} n$$
Geometrically, $\cl(g)$ is the least genus of a surface group which bounds
$g$ homologically. Since genus is not multiplicative under covers but Euler
characteristic is, one can derive a formula for $\scl$ in terms of Euler
characteristic; we give such a formula in Definition~\ref{geometric_scl_definition}
below.

Stable commutator length is, in a sense to be made precise shortly, 
a kind of relative Gromov--Thurston norm. 

The following material is largely drawn from \cite{Calegari_pickle}, \S~2.4. Also
see \cite{Calegari_scl}, \S~2.6 and \cite{Bavard}, \S~3.

\begin{definition}
Let $S$ be a compact orientable surface. Define
$$\chi^-(S) = \sum_{S_i} \min(0,\chi(S_i))$$
where the sum is taken over connected components $S_i$ of $S$.
\end{definition}

\begin{definition}\label{geometric_scl_definition}
Let $G$ be a group. Let $g_1,\cdots,g_m$ be elements in $G$ (not necessarily
distinct). Let $X$ be a connected CW complex with $\pi_1(X) = G$.
Further, for each $i$, let $\gamma_i:S^1 \to X$
be a loop in $X$ in the free homotopy class corresponding to the
conjugacy class of $g_i$. 

If $S$ is an orientable surface, a map $f:S \to X$ is {\em admissible} of
{\em degree $n(S)$} for some positive integer $n(S)$ if there is
a commutative diagram
\[\begindc{0}[4]
	\obj(0,20)[S]{$S$}
	\obj(15,20)[dS]{$\partial S$}
	\obj(30,20)[C]{$\coprod_i S^1$}
	\obj(30,10)[X]{$X$}
	\mor(15,20)(0,20){$i$}[\atright,\solidarrow]
	\mor(15,20)(28,20){$\partial f$}
	\mor(30,20)(30,10){$\coprod_i \gamma_i$}
	\mor(0,20)(30,10){$f$}[\atright,\solidarrow]
\enddc\]
so that the homology class of
$\partial f_*[\partial S]$ is equal to $n(S)$ times the fundamental class of
$\coprod_i S^1$ in $H_1$.

Then define
$$\scl(g_1 + g_2 + \cdots + g_m) = \inf_S \frac {-\chi^-(S)} {2n(S)}$$
where the infimum is taken over all admissible maps of surfaces.
If no admissible surfaces exist, set $\scl(\sum_i g_i) = \infty$.
\end{definition}

\begin{remark}
If $X$ has enough room (e.g. if $X$ is a manifold of dimension $>2$)
then the maps $\gamma_i$ can be taken to be embeddings, and one can
speak of the maps $\gamma_i$ and their images interchangeably. In this
context, one can think of an admissible map as a map of pairs 
$(S,\partial S) \to (X,\cup_i \gamma_i)$ which wraps $\partial S$ around
each $\gamma_i$ with total degree $n(S)$.
\end{remark}

\begin{remark}
When $g \in [G,G]$, the value of $\scl(g)$ is the same with either
definition above.
\end{remark}

The function $\scl$ can be extended to integral group $1$-chains, by the formula
$$\scl(\sum n_ig_i):= \scl(\sum g_i^{n_i})$$
and extended to rational chains by linearity, and to real chains by continuity.
It is finite exactly on group $1$-chains which are boundaries of group $2$-chains;
in other words, $\scl$ defines a pseudo-norm on the real vector space $B_1(G;\R)$,
hereafter denoted $B_1(G)$.

Notice that $\scl$ is, by construction, a homogeneous class function in each
variable separately. If $H$ denotes the subspace of $B_1(G;\R)$ spanned by
elements of the form $g - hgh^{-1}$ and $g^n - ng$ for $g,h \in G$ and $n \in \Z$,
then $\scl$ descends to a pseudo-norm on $B_1(G)/H$.

\subsection{Comparison with Gromov and filling norms}

Let $C_*(G;\R)$ be the bar chain complex of a group (see e.g. \cite{MacLane} Ch.~IV, \S~5 for
details). In the sequel, the coefficient group $\R$ is understood where omitted. 
There is a natural basis for $C_i(G)$ in each dimension, and each $C_i(G)$ becomes
a Banach space with respect to the natural $L^1$ norm. This norm induces
a pseudo-norm on (group) homology, called the {\em Gromov norm} (or {\em $L^1$ norm})
defined by
$$\|[A]\|_1 = \inf_{C \in [A]} \|C\|_1$$
where the infimum ranges over all cycles $C$ representing a homology class $[A]$.

If $X$ is a $K(G,1)$, the norm on $H_2(G;\Q)$ may be calculated geometrically
by the formula
$$\|[A]\|_1 = \inf_S \frac {-2\chi^-(S)} {n(S)}$$
where the infimum is taken over all closed oriented surfaces $S$ mapping
to $X$ by $f:S \to X$ for which $f_*[S] = n(S)[A]$ for some integer $n(S)$, and then
extended to $H_2(G;\R)$ by continuity; see \cite{Gabai_foliations}, Corollary~6.18.
Also compare with Definition~\ref{geometric_scl_definition}.

\vskip 12pt

There is a natural norm on $B_1(G)$, called the (Gersten) filling norm,
introduced in \cite{Gersten}, defined by the formula
$$\|A\|_\partial = \inf_{\partial C = A} \|C\|_1$$
where $\|\cdot\|_1$ denotes the $L^1$ norm on group $2$-chains. Let $\fill(\cdot)$
be the homogenization of $\|\cdot\|_\partial$; i.e.
$$\fill(\sum t_i g_i) = \lim_{n \to \infty} \frac {\|\sum t_i g_i^n\|_\partial} n$$
where $t_i \in \R$ and $g_i \in G$. Then $\fill$ descends to a function on
$B_1(G)/H$ and satisfies
$$\scl(A) = \frac {\fill(A)} 4$$
For $A=g$ for $g \in [G,G]$, this is proved in Bavard \cite{Bavard}; the
general case follows basically the same argument, and is found in \cite{Calegari_scl},
\S~2.6. The factor of $\frac 1 4$ arises because $\fill$ counts triangles, whereas
$\scl$ counts genus. This explains the sense in which $\scl$ can be thought
of as a relative Gromov--Thurston norm.

\subsection{Extremal surfaces}

Given an integral chain $\sum n_i g_i$, an admissible surface is {\em extremal}
if it realizes
$$\scl(\sum n_i g_i) = \frac {-\chi^-(S)} {2n(S)}$$
The {\em Rationality Theorem} from \cite{Calegari_pickle}, is the following:

\begin{theorem}[Rationality Theorem, \cite{Calegari_pickle} p.15]\label{rationality_theorem}
Let $F$ be a free group.
\begin{enumerate}
\item{$\scl(g) \in \Q$ for all $g \in [F,F]$.}
\item{Every integral chain $\sum n_i g_i$ in $B_1(F)$ bounds an extremal surface}\label{extremal_bullet}
\item{The function $\scl$ is piecewise rational linear on $B_1(F)$}
\item{There is an algorithm to calculate $\scl$ on any finite dimensional rational
subspace of $B_1(F)$}
\end{enumerate}
\end{theorem}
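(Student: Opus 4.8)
The plan is to reduce the computation of $\scl$ to a linear programming problem over a rational polyhedral cone, and then to read off all four conclusions from the structure of that problem. First I would realize $F$ as the fundamental group of a rose $R$ (a wedge of circles, one edge per generator), and represent each conjugacy class $g_i$ by the immersed loop $\gamma_i\colon S^1 \to R$ traversing the cyclically reduced word for $g_i$. Every admissible surface $f\colon S \to R$ can then be put into a combinatorial normal form: after compressing, discarding sphere and disk components, and homotoping $f$ rel $\partial$, I would arrange that the preimage under $f$ of the midpoints of the edges of $R$ is a disjoint union of proper arcs (closed components and boundary-parallel arcs can be surgered away without increasing $-\chi^-(S)$ and without changing the degree $n(S)$). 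Cutting $S$ along these arcs decomposes it into finitely many disk \emph{pieces}, each mapping to a neighborhood of the single vertex of $R$, with boundary alternating between arcs of $\partial S$ (each traversing one ``turn'' of some $\gamma_i$, i.e.\ a length-two subword) and ``seam'' arcs produced by the cut.

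Because the $g_i$ are finite words in finitely many generators, there are only finitely many combinatorial types of such pieces. To each normal-form surface I would associate the integer vector recording how many pieces of each type occur. Conversely, a non-negative integer vector can be assembled into an honest surface precisely when it satisfies the linear \emph{gluing} constraints matching seams in pairs over each edge (the number of seam-ends of each oriented type must balance), so the realizable vectors form the integer points of a rational polyhedral cone $C$ cut out by these homogeneous equations together with non-negativity. The key point is that both $n(S)$ and $-\chi(S)$ are restrictions to $C$ of explicit linear functionals (the degree counts boundary traversals; the Euler characteristic is computed additively from the pieces and their seams). Thus $\scl(\sum g_i)$ is the minimum of the linear functional $-\chi/2$ over the rational polytope $C \cap \{n = 1\}$.

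From this formulation the first three conclusions are essentially immediate. The minimum of a rational linear functional over a rational polytope is a rational number attained at a vertex, giving $\scl(g)\in\Q$ and, after clearing denominators of the optimal rational vertex and realizing the resulting integer vector as a genuine surface, an extremal surface in every rational class, which is conclusion \eqref{extremal_bullet}. Piecewise rational linearity follows because varying the chain $\sum n_i g_i$ varies the constraints and functionals of the linear program in a controlled rational-linear way, so $\scl$ is the lower envelope of finitely many rational linear functions on the cones of a rational fan. The algorithmic statement follows since linear programming over explicitly given rational data is algorithmic, and the entire combinatorial package---piece types, gluing matrix, and functionals---is computable directly from the generating words.

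The main obstacle, and the place where real care is needed, is the normal form together with the faithful dictionary between surfaces and cone points. I must check that every admissible surface can be simplified into a disjoint union of disk pieces without ever increasing $-\chi^-(S)/2n(S)$---in particular that closed components, disk components, and boundary compressions can be eliminated, so that the minimizing functional may honestly be taken to be $-\chi$ rather than $-\chi^-$---and, in the reverse direction, that every non-negative integer point of $C$ is actually realized by a compact oriented surface whose boundary wraps around $\sqcup_i \gamma_i$ with total degree $n$ (orientations of the seams and a consistent cyclic ordering around the vertex must be arranged so that the pieces glue to an orientable surface rather than a branched or non-orientable object). Establishing this equivalence rigorously---so that the extremal vertex vector really produces an extremal surface, and no admissible surface beats the linear-programming optimum---is the technical heart of the argument; everything downstream is a formal consequence of finite-dimensional rational linear programming.
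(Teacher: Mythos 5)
Your overall architecture --- normal form relative to a rose, cutting along preimages of edge midpoints, encoding surfaces as integer points of a rational polyhedral cone, and reading off all four bullets from finite-dimensional rational linear programming --- is exactly the method of \cite{Calegari_pickle}, which is all this paper itself does: the theorem is quoted, and the text only sketches the proof via a ``simple branched surface $\mathcal{B}$ \ldots which carries every admissible surface (after compression and homotopy),'' on which $-\chi^-$ is a rational linear function of weights. So the comparison is with that sketch, and your proposal matches it in outline. But there is one genuine gap, and it sits at the crux rather than at the periphery: your claim that ``because the $g_i$ are finite words in finitely many generators, there are only finitely many combinatorial types of such pieces'' is false as stated. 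What is finite is the set of \emph{seam types} (a seam's endpoints lie at one of finitely many positions between letters of the cyclic words) and hence the set of \emph{turns}; but a disk piece is a polygon whose boundary alternates between turns and seams and may have arbitrarily many sides, so the set of piece types is a priori infinite, your cone $C$ is infinite-dimensional, and every downstream conclusion --- rationality of the optimum, attainment at a vertex, piecewise rational linearity, and the algorithm --- collapses. You carefully flagged orientability, realizability of integer points, and elimination of closed components as the ``technical heart,'' but you did not flag this finiteness issue, which is precisely where the real work of \cite{Calegari_pickle} lies and is why the branched surface there is called \emph{simple}.

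The repair is a surgery argument on large polygons. If the pieces are glued along seams, then $\chi(S) = \sum_i \bigl(1 - e_i/2\bigr)$, where $e_i$ is the number of seam-sides of the $i$-th piece (each piece is a disk, and each gluing arc reduces $\chi$ by $1$). Cutting a polygon along a diagonal arc mapping to the vertex replaces one piece having $e$ seam-sides by two pieces having $e_1 + e_2 = e + 2$ seam-sides, and $2 - (e+2)/2 = 1 - e/2$, so the Euler characteristic is unchanged. One then shows that any polygon with two corners of the same combinatorial type can be so subdivided; consequently one may restrict attention to polygons whose corners are pairwise distinct, which bounds the number of sides and yields a genuinely finite set of piece types, hence a finite-dimensional rational linear program. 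With that lemma supplied (together with the realizability direction you already identified, including the doubling trick needed to make integer points glue up orientably), the rest of your argument --- rational vertex, clearing denominators to get an extremal surface for every integral chain, convex piecewise rational linearity of the value function under linear variation of the boundary constraints, and computability --- goes through as you describe.
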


In fact, in \cite{Calegari_pickle}, bullet (\ref{extremal_bullet}) merely says
that every $g \in [F,F]$ rationally bounds an extremal surface, but the argument
of the proof establishes the more general statement. The method of proof makes this
clear: let $X$ be a handlebody with $\pi_1(X) = F$, and let $\gamma_i$ be
loops in $X$ representing the free homotopy classes of the $g_i$. In \cite{Calegari_pickle}
it is shown that there is a simple branched surface $\mathcal{B}$, with boundary mapping to
$\cup_i \gamma_i$, which carries every admissible surface (after compression and
homotopy). The function $-\chi^-$ is a rational linear function of weights on
$\mathcal{B}$, and therefore $-\chi^-$ may be calculated on any rational class
by solving a linear programming problem. An extremal vector obtained e.g. by the
simplex method will be rational, and after scaling, is represented by an extremal surface.

We will also use the following technical Lemma, 
which is Lemma~4.2. from \cite{Calegari_pickle}:

\begin{lemma}\label{positive_wrap}
Let $S$ be a connected surface, and $f:S \to H$ an extremal surface rationally bounding
$\gamma$. Then there is another extremal surface $f':S' \to H$
rationally bounding $\gamma$, for which every component of $\partial S'$ maps to
$\gamma$ with positive degree.
\end{lemma}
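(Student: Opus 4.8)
The plan is to begin with the given extremal $f\colon S\to H$ and to normalise its boundary in stages, using only moves that leave the ratio $-\chi^-(S)/2n(S)$ unchanged; a move that strictly lowered this ratio would contradict extremality, and so signals that the configuration it acts on is already absent. Each component $c$ of $\partial S$ maps to $\gamma$ with a well-defined integer degree, and admissibility forces these degrees to sum to $n(S)>0$. A component of degree $0$ maps to a null-homotopic loop, so $f|_c$ bounds a disc in $H$; capping it keeps $n(S)$ fixed and raises $\chi$, so by extremality no such component occurs (the only exception being the degenerate annulus with $\scl=0$). A pair of components of opposite degrees $+m,-m$ can be tubed together by an annulus mapping into a neighbourhood of $\gamma$; this is the only Euler-characteristic-neutral way to eliminate two boundary components, and it preserves both $n(S)$ and $-\chi^-(S)$, hence extremality. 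After performing all such cancellations I may assume that the surviving negative degrees have no positive partner of equal magnitude.

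The essential difficulty is then an unmatched negative component, say degrees $\{-1,+3\}$ on $\gamma$. Here no further local repair is available: an isotopy of $f$ preserves the degree of every boundary component and so cannot help, while any saddle (band) move that would merge components of unequal degree changes $\chi$, and by the previous paragraph such a move is forbidden by extremality. The required positively-wrapped surface must therefore be produced globally, and for this I would invoke the branched-surface description recalled above. After compression and homotopy $S$ is carried by the simple branched surface $\mathcal{B}$, on which both $-\chi^-$ and $n$ are rational linear functions of the carrying weights; the extremal surfaces thus correspond to the rational points of a face $\Phi$ of the projectivised cone of admissible nonnegative weight vectors. The boundary of $\mathcal{B}$ is a train track carried by the oriented loop $\gamma$, and the degree with which a carried boundary component wraps $\gamma$ is the signed sum of the weights on the boundary branches it traverses. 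The goal becomes the selection of a rational vector in $\Phi$ for which every boundary component it determines has strictly positive net weight; clearing denominators then yields the desired $f'\colon S'\to H$.

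The main obstacle is precisely this last selection: showing that the ratio-minimising face $\Phi$ always contains a weight vector whose boundary wrapping is strictly positive on every component — equivalently, that unmatched negative wrapping can always be traded, at no cost to $-\chi^-/n$, for positive wrapping. Geometrically the trade is effected by moving within $\Phi$ from the extreme point representing $S$ to one supported away from the negatively-oriented boundary branches, the disc-capping and annular-tubing moves of the first paragraph being the shadows of the edges of $\Phi$ along which one travels. I would try to arrange $\mathcal{B}$ so that its boundary branches are coherently oriented along $\gamma$, making ``carried'' synonymous with ``positively wrapped''; the delicate points are then to verify this coherent orientation and to check that passage to the carried representative preserves both the projective homology class and the extremal value. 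This is where I expect the real work to lie.
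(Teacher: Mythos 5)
Your first paragraph is fine as far as it goes---capping degree-zero boundary components and tubing $\pm m$ pairs along annuli are exactly the cheap, extremality-preserving moves one makes first---but the proof has a genuine gap at precisely the step you flag, and the branched-surface route you sketch is not a repair so much as a restatement of the problem. The two claims your plan needs (that the boundary branches of $\mathcal{B}$ can be coherently oriented along $\gamma$, and that the $-\chi^-/n$-minimizing face of the weight cone contains a vector all of whose induced boundary components wrap positively) are left entirely unproved, and nothing in the carried-surface machinery forces either: the minimizing face is cut out by the value of $-\chi^-/n$ alone, and there is no a priori reason it meets the locus of positively wrapping weight vectors, which is the very content of the lemma. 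Note also that this paper does not prove the lemma at all---it imports it as Lemma~4.2 of \cite{Calegari_pickle}---and the argument there is of a completely different, and much more elementary, nature than your linear-programming plan.

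The missing idea is the finite covering trick, the signature move of both papers (compare the LERF argument in Lemma~\ref{extremal_injective} and the $\Z/N\Z$ covers of Lemma~\ref{covering_lemma}). If $\widetilde{S} \to S$ is any cover of degree $k$, then $-\chi^-$ and $n$ both multiply by $k$, so extremality is preserved; one therefore passes to a cover in which every boundary component wraps $\gamma$ with degree exactly $\pm N$ for one common $N$, and then your own annulus move cancels each $-N$ component against a $+N$ component (of which there are strictly more, since the total degree $kn(S)$ is positive), leaving all boundary positive. Your example $\{-1,+3\}$ shows the one subtlety: abelian covers do not suffice, because the boundary classes sum to zero in $H_1(S)$, so a homomorphism $\phi$ to $\Z/N\Z$ with $\phi(\partial_1)$ of order $3$ and $\phi(\partial_2)$ trivial is obstructed by $\phi(\partial_1)+\phi(\partial_2)=0$. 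Instead one uses a non-abelian permutation representation after first passing to a positive-genus cover: for instance, with genus at least $1$, send $\partial_1 \mapsto (123)$ and $\partial_2 \mapsto \mathrm{id}$ in $S_3$, which is consistent because a $3$-cycle is a commutator in $S_3$ and so is absorbed by the genus; the resulting degree-$3$ cover has boundary degrees $\{-3,+3,+3,+3\}$, and one annulus finishes the job. So your instinct that the unmatched case requires a global move was correct, but the correct global move is a cover of $S$, not a change of carried representative; as written, the proposal's central step is an admitted conjecture rather than a proof.
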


The same argument shows that if $S$ bounds some collection $\sum \gamma_i$, one may replace $S$ if necessary
by another extremal surface for which every map of a boundary component of $S$
to every component $\gamma_i$ has positive degree. Such an extremal surface is
said to be {\em positive}. Hence in the sequel we will assume that all our extremal
surfaces are positive.

From our perspective, the importance of extremal surfaces is the following:

\begin{lemma}\label{extremal_injective}
Let $f:(S,\partial S) \to (X,\cup_i\gamma_i)$ be an extremal surface for $\sum n_i g_i$. 
Then $f,S$ is {\em incompressible} and {\em boundary incompressible}. 
That is, $f_*:\pi_1(S) \to \pi_1(X)$
is injective, and if $\alpha \subset S$ is an essential immersed proper arc with
endpoints on components $\partial_i,\partial_j$ of $\partial S$ both mapping to $\gamma_k$,
there is no arc $\beta \subset \gamma_k$ so that $f(\alpha) \cup \beta$ is homotopically
trivial in $X$.
\end{lemma}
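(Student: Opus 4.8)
The plan is to argue by contradiction: if $f,S$ failed to be incompressible or boundary incompressible, one could perform a compression or boundary compression to produce a competitor surface with strictly smaller normalized complexity $-\chi^-/2n(S)$, violating extremality. This is the relative analogue of the compression strategy sketched in the introduction, carried out at the level of $\chi^-$ rather than genus, with the one new wrinkle that the boundary data $\partial f_*[\partial S]$ must be tracked through the surgery.

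For incompressibility, suppose $f_*\colon\pi_1(S)\to\pi_1(X)$ has nontrivial kernel, and choose a loop $\alpha\subset S$ that is essential in $S$ but with $f(\alpha)$ null-homotopic in $X$. Since surface groups are LERF (Scott \cite{Scott}), after passing to a finite cover $\til S\to S$ of some degree $d$ we may assume $\alpha$ lifts to an embedded simple closed curve; the pulled-back map $\til f$ is again admissible and again extremal, because both $-\chi^-$ and the degree $n$ are multiplied by $d$, leaving their ratio unchanged. Now compress $\til S$ along $\alpha$, capping the two resulting circles with disks. Because $f(\alpha)$ bounds a disk in $X$, the map extends over the caps, so the new surface $S''$ is admissible of the same degree and represents the same chain $\sum n_i g_i$, the surgery being interior and hence leaving $\partial\til S$ untouched. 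An essential compression raises $\chi$ by $2$ in the affected component, so (assuming as we may that the components of $S$ are hyperbolic) $-\chi^-(S'')<-\chi^-(\til S)$, giving $-\chi^-(S'')/2n(S'')<\scl(\sum n_i g_i)$, a contradiction. Hence $f_*$ is injective.

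For boundary incompressibility, suppose there is an essential proper arc $\alpha$ with endpoints on $\partial_i,\partial_j$ mapping to $\gamma_k$ and an arc $\beta\subset\gamma_k$ with $f(\alpha)\cup\beta$ null-homotopic in $X$. As before, pass to a finite cover in which $\alpha$ becomes embedded, preserving extremality, and then boundary-compress $\til S$ along $\alpha$. Cutting a surface along a properly embedded essential arc raises $\chi$ by $1$, so $-\chi^-$ again strictly decreases. The point requiring care is the degree: cutting along $\alpha$ reroutes the boundary through two copies of $f(\alpha)$, and using the null-homotopy we may homotope $f(\alpha)$ onto the arc $\beta\subset\gamma_k$. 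In the traversal of the new boundary the two copies are run with opposite orientations, so their contributions to the winding degree along $\gamma_k$ cancel, while the old contributions of $\partial_i$ and $\partial_j$ persist; thus $\partial f_*[\partial S]$ is unchanged on the circle of $\coprod_i S^1$ corresponding to $g_k$ and untouched on every other circle. Hence $S''$ is admissible of the same degree representing the same chain, and $-\chi^-(S'')/2n(S'')<\scl(\sum n_i g_i)$, a contradiction.

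The main obstacle is precisely the degree bookkeeping in the boundary case: one must verify that the boundary compression leaves $n(S)$ unchanged on every component of $\coprod_i S^1$, which is exactly where the hypothesis that $f(\alpha)\cup\beta$ is null-homotopic (rather than merely that $\alpha$ is essential) enters, through the cancellation of the two rerouting copies of $\beta$. Secondarily, one should confirm that neither surgery produces new disk or sphere components that carry boundary degree and thereby alter $n(S)$; in the torsion-free (free-group, word-hyperbolic) setting of interest this cannot occur, since a disk bounding a nonzero power of some $\gamma_k$ would force that power to be trivial. Both the interior and boundary arguments also rely on the reduction to embedded curves and arcs via finite covers, and the fact that covers scale $-\chi^-$ and $n$ by a common factor is what allows the contradiction to be derived in the cover and then read off for the original extremal surface.
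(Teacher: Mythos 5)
Your proposal is correct and follows essentially the same route as the paper: use LERF to lift $\alpha$ to an embedded loop (or arc) in a finite cover, note that extremality is preserved since $-\chi^-$ and $n$ scale by the covering degree, and then perform a surgery that strictly decreases $-\chi^-/2n$, contradicting extremality. The only cosmetic difference is in the boundary case, where you boundary-compress along $\alpha$ directly and verify that the two rerouted copies of $\beta$ cancel in the winding degree, while the paper attaches a $1$-handle with core mapping to $\beta$ and then compresses the resulting embedded null-homotopic loop $\alpha \cup \textnormal{core}$ --- the same surgery with the same degree bookkeeping, so that $n(S'') = n(S')$.
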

\begin{proof}
Suppose $\alpha \subset S$ represents a conjugacy class in the kernel of $f_*$.
Since surface groups are LERF (\cite{Scott}), there is a finite cover $S'$ of
$S$ to which $\alpha$ lifts as an embedded loop. The lifted map $f':S' \to X$
is admissible, with $-\chi^-(S')/2n(S') = -\chi^-(S)/2n(S)$, so $f':S' \to X$
is also extremal. But $f'$ can be compressed along the (now embedded) loop
$\alpha$, reducing $-\chi^-$ while keeping $n(S')$ fixed, thereby contradicting
the fact that $f:S \to X$ was extremal.

Similarly, suppose $\alpha$ is an arc such that $f(\alpha) \cup \beta$
is homotopically trivial in $X$. Let $S'$ be a cover of $S$ in which
$\alpha$ is embedded. Let $S''$ be obtained from $S'$ by attaching a $1$-handle $R$
to $\partial \alpha$, and let $f'':S'' \to X$ be equal to $f'$ on $S'$,
and map the core of $R$ to $\beta$. Then $n(S'') = n(S')$. However, the union
of $\alpha$ with the core of $R$ is an essential embedded loop in $S''$ which
maps to a homotopically trivial loop in $X$. Hence we can compress this loop,
obtaining $f''':S''' \to X$ with $-\chi^-(S''')/2n(S''') < -\chi^-(S)/2n(S)$,
thereby contradicting the fact that $f:S \to X$ was extremal.
\end{proof}

A similar argument shows that if a closed surface realizes the Gromov--Thurston
norm in its homology class, it is injective.
In the sequel, by abuse of notation, we will use the phrase ``$S$ is injective'' to mean
that $f,S$ is incompressible and boundary incompressible.

\section{Surface subgroups}

\subsection{Graphs of free groups}

\begin{definition}
A {\em graph of groups} is a collection of groups indexed by the vertices
and edges of a connected graph, together with a family of injective homomorphisms from the
edge groups into the vertex groups. Formally, let $\Gamma$ be a connected graph. For
each vertex $v$ there is a vertex group $G_v$, and for each edge $e$ an
edge group $G_e$ so that for each inclusion $i:v \to e$ as an endpoint,
there is an injective homomorphism $\varphi_i:G_e \to G_v$.

The {\em fundamental group} $G$ of a graph $\Gamma$ of groups (as above) is defined as follows.
Let $G'$ be the group generated by all the groups $G_v$ and an element $e$ for each (oriented)
edge $e$ with relations that each edge element $e$ conjugates the subgroup
$i(G_e)$ of $G_v$ to the subgroup $j(G_e)$ of $G_w$, where $v$ is the
initial vertex of $e$ and $w$ is the final vertex, with respect to the
choice of orientation on $e$. Let $T$ be a maximal subtree of $\Gamma$. Then
define $G$ to be the quotient of $G'$ by the normal subgroup generated by elements
$e$ corresponding to edges of $T$.
\end{definition}

By abuse of notation, we sometimes say that $G$ {\em is a graph of groups}
with graph $\Gamma$. See e.g. Serre \cite{Serre_trees} \S~5.1. for more details.

In the sequel, let $G$ be a graph of groups with graph $\Gamma$ satisfying the
following properties:

\begin{enumerate}
\item{Every vertex group $G_v$ is free of finite rank}
\item{Every edge group $G_e$ is cyclic}
\item{The graph $\Gamma$ is finite}
\end{enumerate}

We say that such a group $G$ is {\em a graph of free groups amalgamated over
cyclic subgroups}. 

\subsection{Hyperbolic groups}

\begin{definition}
A path-metric space $X$ is {\em $\delta$-hyperbolic} for some $\delta \ge 0$
if for every geodesic triangle $abc$, the edge $ab$ is contained in the
(metric) $\delta$-neighborhood of the union of edges $ac \cup bc$.
\end{definition}

\begin{definition}
A group $G$ with a finite generating set $S$ is {\em word-hyperbolic}
(or just {\em hyperbolic} for short) if the Cayley graph $C_S(G)$ is
$\delta$-hyperbolic as a path metric space, for some finite $\delta$.
\end{definition}

Hyperbolic groups are introduced in \cite{Gromov_hyperbolic}, inspired in part
by work of Cannon, Epstein, Rips and Thurston. The theory of hyperbolic groups
is vast; the only property of hyperbolic groups we will need is that they do
not contain $\Z \oplus \Z$ or Baumslag--Solitar subgroups. Here the Baumslag--Solitar
group $B(p,q)$ ($p,q \ne 0$) is given by the presentation
$$B(p,q):=\langle a,b \; | \; ba^pb^{-1} = a^q\rangle$$
Note that $B(1,1) = \Z \oplus \Z$ as a special case.

\subsection{Construction of surface subgroups}

We are now in a position to state the main theorem of this paper.

\begin{theorem}
Let $G$ be a graph of free groups amalgamated over cyclic subgroups. If
$G$ is word-hyperbolic, and $H_2(G;\Q)$ is nonzero, then $G$ contains a
closed hyperbolic surface subgroup. Furthermore, the unit ball of the Gromov--Thurston
norm in $H_2(G;\R)$ is a finite-sided rational polyhedron.
\end{theorem}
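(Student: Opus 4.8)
The plan is to realize $G$ as the fundamental group of a graph of spaces and to localize the computation of the Gromov--Thurston norm to the vertex groups, where the Rationality Theorem applies. For each vertex $v$ take a wedge of circles $X_v$ with $\pi_1(X_v)=G_v$, and for each edge $e$ joining $v$ to $w$ attach a cylinder $S^1\times[0,1]$ whose ends map to loops $\gamma_e^v\subset X_v$ and $\gamma_e^w\subset X_w$ representing the generators $\varphi_i(G_e)$ and $\varphi_j(G_e)$; call the resulting $K(G,1)$ space $X$. Cutting $X$ along the core circles of the cylinders and applying Mayer--Vietoris, and using that $H_2(G_v)=H_2(G_e)=0$ while $H_1(G_e;\Q)=\Q$, produces an exact sequence
$$0 \to H_2(G;\Q) \to \bigoplus_e H_1(G_e;\Q) \xrightarrow{\ \partial\ } \bigoplus_v H_1(G_v;\Q).$$
Thus a class $A\in H_2(G;\Q)$ is the same data as a rational weight $n_e$ on each edge so that, for each vertex $v$, the $1$-chain $c_v:=\sum_{e\ni v}\pm n_e\,\varphi(\mathrm{gen}_e)$ is null-homologous in $G_v$; that is, $c_v\in B_1(G_v)$.

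Next I would localize the norm. Given any admissible closed surface $f:S\to X$ for $A$, make $f$ transverse to the core circles and compress away every component of the cylinder preimages that is not an essential annulus; the annular parts have $\chi=0$, and cutting along them exhibits $S$ as a union of vertex pieces $S_v$, each admissible for $c_v$ in $G_v$, with $-\chi^-(S)=\sum_v -\chi^-(S_v)$. Comparing Definition~\ref{geometric_scl_definition} with the Gabai formula for $\|\cdot\|_1$ yields $\|A\|_1=4\sum_v\scl(c_v)$. For the reverse inequality I invoke Theorem~\ref{rationality_theorem}: each $c_v$ bounds an extremal surface $S_v$, positive by Lemma~\ref{positive_wrap}, and after passing to finite covers (multiplying the common degree $n$, which leaves $-\chi^-/n$ fixed) so that every boundary circle wraps its edge loop exactly once, I glue the $S_v$ across copies of the cylinders into a closed surface realizing $\|A\|_1$.

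The crux is injectivity. Each vertex piece $S_v$ is incompressible and boundary-incompressible by Lemma~\ref{extremal_injective}, and each gluing annulus is $\pi_1$-injective. I would run a Bass--Serre normal-form argument: the decomposition of $S$ along the annuli expresses $\pi_1(S)$ as a graph of groups mapping to that of $G$, injectively on vertex and edge groups. A nontrivial element of the kernel, written as an edge-path word, must fail to be reduced; an unreduced spelling is precisely either an essential loop compressible inside some $S_v$ (forbidden by incompressibility) or a backtrack across a single annulus whose intermediate arc is homotopic rel endpoints into the edge loop (forbidden by boundary-incompressibility). Hence $f_*:\pi_1(S)\to G$ is injective. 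Since $\|A\|_1\neq 0$, some component of $S$ has $\chi<0$, so is a closed orientable surface of genus $\geq 2$, and its fundamental group is the desired closed hyperbolic surface subgroup.

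Finally, for the shape of the unit ball, note that $A\mapsto(c_v)_v$ is a rational-linear map of the finite-dimensional space $H_2(G;\R)$ into a fixed finite-dimensional rational subspace of $\bigoplus_v B_1(G_v)$ spanned by the finitely many loops $\varphi(\mathrm{gen}_e)$. On such a subspace $\scl$ is piecewise rational linear with finitely many regions of linearity by Theorem~\ref{rationality_theorem}, so $\|A\|_1=4\sum_v\scl(c_v)$ is piecewise rational linear, and its unit ball is a finite-sided rational polyhedron. Here is where I expect word-hyperbolicity to be essential, and where the main obstacle lies: to conclude the ball is bounded, i.e. the norm nondegenerate, I would argue that a nonzero class of vanishing norm is represented by surfaces with $-\chi^-/n\to 0$, forcing essential tori or annuli in $X$ and hence a $\Z\oplus\Z$ or Baumslag--Solitar subgroup of $G$, contrary to hyperbolicity. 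Making both this nondegeneracy argument and the boundary-matching in the gluing fully rigorous is the delicate part of the proof.
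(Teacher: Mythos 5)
Your architecture matches the paper's almost step for step---the graph-of-spaces $K(G,1)$, the Mayer--Vietoris localization of a class $A$ to chains $c_v \in B_1(G_v)$, the identity $\|A\|_1 = 4\sum_v \scl(c_v)$, positive extremal surfaces from Theorem~\ref{rationality_theorem} and Lemma~\ref{positive_wrap}, injectivity of the glued surface via Lemma~\ref{extremal_injective} together with a van Kampen/Bass--Serre normal-form argument, and piecewise rational linearity of the norm---but the step you yourself flag as delicate, the boundary matching, contains a genuine error. You propose to pass to finite covers of the vertex pieces ``so that every boundary circle wraps its edge loop exactly once.'' This is impossible in general: if a boundary component of $S_v$ wraps $\gamma$ with degree $d$, then under any finite cover $S' \to S_v$ each boundary circle of $S'$ lying over it wraps $\gamma$ with degree $kd$ for some integer $k \ge 1$. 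Passing to covers can only \emph{multiply} boundary degrees; it can never lower them to $1$.

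The correct move, and the technical heart of the paper's proof, is to equalize degrees \emph{upward} to a common value $N$ (the least common multiple of the degrees occurring on the two sides of an edge). This is Lemma~\ref{covering_lemma}: a $\Z/N\Z$-valued function $\phi$ on boundary components extends to a homomorphism $\pi_1(S) \to \Z/N\Z$ if and only if $\sum_i \phi(\partial_i) = 0$ in $\Z/N\Z$, and in the kernel cover each circle over $\partial_i$ maps with degree equal to the order of $\phi(\partial_i)$. To arrange the vanishing-sum hypothesis the paper first replaces each vertex piece of negative Euler characteristic by a positive-genus cover and then a degree-$2$ cover, so that the relevant boundary components come in pairs mapping with equal degree $d$; assigning $\phi = \pm d$ on each such pair and $0$ elsewhere makes the sum vanish, after which both sides of an edge consist of boundary circles of degree exactly $N$ and the gluing closes up. Your proposal contains no substitute for this mechanism. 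Moreover, the residual case $\chi = 0$---annular vertex pieces, where the genus trick is unavailable and whose free boundary components may have to be glued to each other---is precisely where the paper invokes word-hyperbolicity in the \emph{existence} part of the theorem: such a self-gluing would produce a $\Z \oplus \Z$ or Baumslag--Solitar subgroup. So hyperbolicity is needed not only for nondegeneracy of the norm (where your tori/annuli sketch matches the paper's intent) but also to dispatch this degenerate gluing case, which your outline omits.
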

\begin{proof}
We build a space $X$ with $\pi_1(X) = G$ as follows. For each vertex $v$
let $H_v$ be a handlebody with $\pi_1(H_v) = G_v$. For each edge $e$
let $A_e$ be an annulus. For each $i:v \to e$ let $\gamma_i \subset X$
be an embedded loop representing the conjugacy class of the generator
of $i(G_e)$, and glue the corresponding boundary component of $A_e$ to $H_v$
along $\gamma_i$. The Seifert van-Kampen theorem justifies the equality
$\pi_1(X) \cong G$. In fact, since each $H_v$ and $A_e$ is a $K(\pi,1)$,
and since the edge homomorphisms are all injective, the space $X$ itself is
a $K(\pi,1)$. See e.g. \cite{Hatcher}, Theorem~1B.11. p.92. Hence
$H_2(G;\Lambda) = H_2(X;\Lambda)$ for all coefficient groups $\Lambda$.

Let $E$ denote the union of the cores of the annuli $A_e$. Let $V = X - E$
and let $N$ be a regular neighborhood of $E$. The Mayer--Vietoris sequence
contains the following exact subsequence
$$H_2(V) \oplus H_2(N) \to H_2(X) \to H_1(V \cap N) \to H_1(V) \oplus H_1(N)$$
Since $H_2(V) = H_2(N) = 0$, it follows that an element of $H_2(X)$ is determined
by its image in $H_1(V \cap N)$. Geometrically, let $Y$ be obtained from $X$
by crushing each $H_v$ and a cocore of each $A_e$ to a point. Then $Y$
is a wedge of $S^2$'s, one for each $A_e$. The induced map $H_2(X) \to H_2(Y)$ is
an injection, and an element of $H_2(X)$ is determined by the degree with
which it maps over each sphere summand of $Y$.

Let $A$ be a nonzero class in $H_2(X)$ represented by a map of a closed surface
$f:S \to X$. If we make $f$ transverse to the core of each $A_e$ and adjust by
a homotopy, we can assume that $S_e:= f^{-1}(A_e)$ is a union of subsurfaces of $S$
each mapping properly to $A_e$. If $S_e^i$ is a component of $S_e$, the degree
of $f:S_e^i \to A_e$ is equal to the number of times $\partial S_e^i$ winds
(with multiplicity) around either boundary component of $A_e$. If some $S_e^i$
maps to some $A_e$ with degree $0$, compress a suitable subsurface of 
$S_e^i$ and push it off $A_e$ by a homotopy.

For each $v$, let $\gamma_1,\cdots,\gamma_m$ denote the
set of loops in $H_v$ which are the boundaries of components of the various $A_e$.
The surface $S_v: = f^{-1}(H_v)$ maps to $H_v$ with boundary wrapping various
times around the various $\gamma_i$. Let $n_i \in \Z$ be such that $S_v$
is an admissible surface bounding $\sum n_i \gamma_i$. Note that the $n_i$
(for various $v$)
are determined by the homology class $A$, and are precisely the coefficients
of the element $\partial A \in H_1(V \cap N)$ with respect to a basis for
$H_1(V \cap N)$ consisting of the various $\gamma_i$.

\vskip 12pt

For each $v$, let $g_v:T_v \to X$ be an extremal surface for $\sum n_i g_i$.
Note that $\partial T_v$ represents $n_v \sum n_i g_i$ in $H_1(V \cap N)$
for some integer $n_v$. If the various $T_v$ could be glued together along their
boundary components compatibly with $\Gamma$, the components of the
resulting surface would be injective, and their union would
map to $X$, representing a multiple of the class $A$ in $H_2(X)$. If $G$
is word-hyperbolic, we will show how to construct suitable covers of the
$T_v$ which can in fact be glued up.

\begin{lemma}\label{covering_lemma}
Let $S$ be an orientable surface with nonempty boundary components $\partial_i S$.
For $N \in \Z$, let $\phi:\partial_i \to \Z/N\Z$ be some function. If
$\sum_i \phi(\partial_i) = 0 \in \Z/N\Z$ then $\phi$ extends to a function
$\pi_1(S) \to \Z/N\Z$ whose kernel defines a regular cover $S'$ of $S$ with the
property that each boundary component $\partial_{ij}$ in the preimage of 
$\partial_i$ maps to $\partial_i$ with degree equal to the order of
$\phi(\partial_i)$ in $\Z/N\Z$.
\end{lemma}
\begin{proof}
Homomorphisms from $\pi_1(S)$ to abelian groups are exactly those which factor
through the abelianization $H_1(S)$. The components $\partial_i$ determine elements
of $H_1(S)$ which are subject only to the relation $\sum \partial_i = 0 \in H_1(S)$.
This follows directly from the exact sequence in relative homology
$$H_2(S) \to H_2(S,\partial S) \to H_1(\partial S) \to H_1(S)$$
together with the fact that $H_2(S) = 0$ and $H_2(S,\partial S) = \Z$.

The other statements are standard facts from the theory of covering spaces.
\end{proof}

By invoking Lemma~\ref{covering_lemma} repeatedly, we will construct covers of
the $T_v$ which can be glued up over the various $A_e$ one by one.
Let $e$ be an edge, and $v,w$ the end vertices. Let $\gamma \in H_v$ and
$\delta \in H_w$ be the loops along which the boundary components of
$A_e$ are attached. Suppose we have surfaces $T$ and $U$ mapping to $X$
and subsets $\partial_\gamma T, \partial_\delta U$ of the boundary
components which map to $\gamma$ and $\delta$ respectively. By
Lemma~\ref{positive_wrap} we can assume that each component of $\partial_\gamma T$
maps to $\gamma$ with positive degree, and similarly for $\partial_\delta U$.
Note that we should allow the possibility that $T=U$.

Assume for the moment that $\chi(T) < 0$ and $\chi(U) < 0$.
If $S$ is a surface of negative Euler characteristic, then $S$ admits a finite
cover with positive genus. Furthermore, if $S$ has positive genus, then $S$ admits
a degree $2$ cover $S'$ for which every boundary component of $S$ has exactly
two preimages in $S'$, each of which maps with degree $2$. So without loss of
generality, we can assume that the components of $\partial_\gamma T$ come in
pairs which each map to $\gamma$ with the same degree, and similarly for the
components of $\partial_\delta U$.

Let $N$ be the least common multiple of the degrees of maps from components
of either $\partial_\gamma T$ or $\partial_\delta U$ to $\gamma$ or $\delta$.
We will define homomorphisms $\phi:\pi_1(T) \to \Z/N\Z$ and $\psi:\pi_1(U) \to \Z/N\Z$
as follows. If $\nu,\nu'$ are a pair of components of $\partial_\gamma T$ mapping to $\gamma$
with the same degree $d$, then define $\phi(\nu) = d$ and $\phi(\nu') = -d$,
and define $\psi$ similarly on pairs of components of $\partial_\delta U$. 
Note that $\phi,\psi$ may be extended to have the value $0$ on components
of $\partial T$ and $\partial U$ not appearing in $\partial_\gamma T$ or
$\partial_\delta U$. Let $T',U'$ be the corresponding covers. Then by
construction, every component of $\partial_\gamma T'$ maps to $\gamma$ with
degree $N$, and every component of $\partial_\delta U'$ maps to $\delta$ with
degree $N$, so $U'$ and $T'$ can be glued up along $\partial_\gamma T'$ and
$\partial_\delta U'$ and their maps to $X$ extended over $A_e$.
Proceeding inductively, we can construct a surface $S'$ and a map $f':S' \to X$
representing some integral multiple of the class $A$. Since $S'$ is made by
gluing covers of injective maps, the definition of injective and
the Seifert van-Kampen theorem implies that every component of $S'$ is injective.

If $\chi(T)=0$ for some (possibly intermediate) surface $T$, then $T$ consists of
a union of annuli. If not every component $T$ is being glued up to $U$, one can
still take covers of these annuli as above and glue up. The only potentially
troublesome case is when $T=U$, and the free boundary components of $T$ are
being glued up to each other. But in this case, $G$ contains the mapping cylinder
of an injective map from $\Z$ to itself; i.e. it contains a $\Z \oplus \Z$ or
a Baumslag--Solitar group, and is therefore not hyperbolic. This proves the
first part of the theorem.

\vskip 12pt

To prove the statement about Gromov--Thurston norms, observe that if
$f'_*[S'] = n[S]$ in homology, then by construction $-\chi^-(S')/n \le -\chi^-(S)$.
Hence $S'$, as constructed, realizes the Gromov--Thurston norm in its homology
class. Note that this gives another proof that $S'$ is injective.
In fact, for each $H_v$, let $B_v \subset B_1(G_v)$ be the subspace spanned
by the $\gamma_i$ along which various $A_e$ are attached. The boundary map in
the Mayer--Vietoris sequence defines an integral linear injection
$H_2(X) \xrightarrow{\partial} \bigoplus_v B_v$ with components $\partial_v(A) \in B_v$,
and by the construction above, 
$$\|A\|_1 = 4 \cdot \sum_v \scl(\partial_v A)$$
Since each $\partial_v$ is an integral linear map, and the $\scl$ pseudo-norm on
each $B_v$ is a rational piecewise-linear function, the $L^1$ norm (i.e. the
Gromov--Thurston norm) on $H_2(X;\R)$ is a piecewise rational linear function.
Since $G$ is hyperbolic, the unit ball is a (nondegenerate) finite sided rational
polyhedron.
\end{proof}

\begin{remark}
If $G$ is not necessarily word-hyperbolic, it is nevertheless true 
(essentially by the argument above) that the Gromov--Thurston pseudo-norm on
$H_2(G;\R)$ is piecewise rational linear. Moreover, the same argument shows that
for any $G$ obtained as a graph of free groups amalgamated over cyclic subgroups,
and for any homology class $A \in H_2(G;\Q)$, either some multiple of $A$ is
represented by an injective closed hyperbolic surface, or $\|A\|_1=0$.
\end{remark}

\begin{remark}
If $M$ is a compact $3$-manifold, every integral class $A$ in $H_2(\pi_1(M);\Z)$
is represented by an {\em embedded} surface $S$ which realizes the infimum of
$-\chi^-$ in its projective class. It follows that the Gromov--Thurston norm 
(with Gromov's normalization) takes on values in $4\Z$ on $H_2(\pi_1(M);\Z)$.
This by itself ensures that the unit ball is a rational polyhedron. However, for
$G$ a graph of free groups as above, the Gromov--Thurston norm can take on
rational values with arbitrary denominators on elements of $H_2(G;\Z)$, so the
polyhedrality of the norm is more subtle. See \cite{Calegari_scl}, \S~4.1.9.
\end{remark}

\section{Acknowledgment}
While writing this paper I was partially supported by NSF grant DMS 0405491.
I would like to thank Mladen Bestvina, Benson Farb and Dongping Zhuang for
comments and some discussion. The work in this paper builds largely on work
done in \cite{Calegari_pickle} which benefited greatly from many discussions
with Jason Manning.

\end{document}